\documentclass[11pt]{amsart}

\usepackage[pdftex]{graphicx} \usepackage{color}

\usepackage[english]{babel} \usepackage[utf8]{inputenc}

\usepackage{microtype}
\usepackage{amsfonts}
\usepackage{verbatim}
\usepackage{amsmath}
\usepackage{amssymb}
\usepackage{amsthm}
\usepackage{extpfeil}
\usepackage{bbm}
\usepackage{paralist}
\usepackage[T1]{fontenc}
\usepackage[colorlinks, citecolor=magenta,urlcolor=red,pdftex]{hyperref}

\usepackage{nicefrac}
\usepackage{microtype}
\usepackage{mathrsfs}

\theoremstyle{plain}

\newtheorem{thm}{Theorem}
\newtheorem{cor}[thm]{Corollary}
\newtheorem{lem}[thm]{Lemma}
\newtheorem*{lem*}{Lemma}

\theoremstyle{definition}

\newcommand{\xqedhere}[2]{%
  \rlap{\hbox to#1{\hfil\llap{\ensuremath{#2}}}}}

\newcommand\Defn[1]{\textbf{\color{blue}#1}}
\newcommand{\cm}[1]{}

\newcommand\mr[1]{\mathrm{#1}}

\newcommand{\R}{\mathbb{R}}

\DeclareMathOperator{\Lk}{Lk}
\DeclareMathOperator{\St}{St}
\DeclareMathOperator{\st}{st}
\DeclareMathOperator{\sd}{sd}

\DeclareMathOperator*{\conv}{conv}

\renewcommand{\phi}{\varphi}

\title[Derived subdivisions induce polytopality]{Derived subdivisions make every 
PL sphere polytopal}
\author{Karim A.~Adiprasito}
\author{Ivan Izmestiev}
\address{Institut des Hautes \'Etudes Scientifiques, Bures-sur-Yvette, France}
\email{adiprasito@ihes.fr, adiprasito@math.fu-berlin.de}
\address{Institut f\"ur Mathematik \\
Freie Universit\"at Berlin \\
Germany}
\email{izmestiev@math.fu-berlin.de}

\keywords{PL sphere; polytopality; derived subdivision}
\subjclass[2010]{57Q05, 52B70, 52B11}

\date{March 20, 2014}
\thanks{K.~Adiprasito has been supported by an EPDI postdoctoral fellowship and 
by the Romanian NASR,
CNCS-UEFISCDI, project PN-II-ID-PCE-2011-3-0533.}
\thanks{I.~Izmestiev has been supported by the European Research Council under 
the European Union's Seventh Framework Programme (FP7/2007-2013)/\allowbreak ERC 
Grant agreement no.~247029-SDModels}

\parindent=0pt
\parskip=5pt

\begin{document}

\begin{abstract}
We give a simple proof that some iterated derived subdivision of every PL sphere 
is combinatorially equivalent to the boundary of a simplicial polytope, thereby 
resolving a problem of Billera (personal communication).
\end{abstract}

\maketitle

\vskip -10mm

\subsection{Making any PL sphere polytopal}
A \Defn{subdivision} of a simplicial complex $\Delta$ is a simplicial complex 
$\Delta'$ with the same underlying space as $\Delta$, such that for every face 
$D'$ of $\Delta'$ there is some face $D$ of $\Delta$ for which $D' \subset D$. 
One also says that $\Delta'$ is a \Defn{refinement} of $\Delta$, or writes 
$\Delta' \prec \Delta$. A \Defn{stellar subdivision} of $\Delta$ at a face 
$\tau$ is defined as
\[
\st(\tau,\Delta):=(\Delta-\tau) \cup \{\conv \{v_\tau\cup \sigma\} : \sigma \in 
\St(\tau,\Delta)-\tau\subset \Delta \}
\]
Here $\Delta-\tau$ denotes the \Defn{deletion} of $\tau$ from $\Delta$, i.e.\ 
the maximal subcomplex of $\Delta$ that does not contain $\tau$, the point 
$v_\tau$ lies anywhere in the relative interior of $\tau$, and 
$\St(\tau,\Delta)$ is the \Defn{star} of $\tau$ in $\Delta$, i.e.\ the minimal 
subcomplex of $\Delta$ that contains all faces of $\Delta$ containing $\tau$. 
Clearly, the combinatorial type of the stellar subdivision does not depend on 
the choice of $v_\tau$.

A \Defn{derived subdivision} $\sd \Delta$ is obtained by stellarly subdividing 
$\Delta$ at all faces in order of decreasing dimension, cf.\ \cite{Hudson}. A 
special case is the \Defn{barycentric subdivision}, where the point $v_\tau$ is 
the barycenter of $\tau$. 

The derived subdivision can be iterated, by defining $\sd^m \Delta:=\sd 
(\sd^{m-1} \Delta)$ and $\sd^0 \Delta:=\Delta$. Our main result in this note is:
\begin{thm}
\label{thm:MakePoly}
For every PL sphere $\Delta$, there exists a $k\ge 0$ such that $\sd^k \Delta$ 
is \Defn{polytopal}, i.e., it is combinatorially equivalent to the boundary 
complex of some convex polytope.
\end{thm}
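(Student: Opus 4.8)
The plan is to route everything through convex realizations of stellar subdivisions, exploiting that $\Delta$ is only PL-homeomorphic to, rather than equal to, a polytopal sphere. First I would record the elementary geometric fact that a single stellar subdivision preserves polytopality: if $Q$ is a simplicial polytope and $\sigma$ a face of $\partial Q$, then $\st(\sigma,\partial Q)$ is again polytopal. Indeed, pick a relative interior point $x$ of the face of $Q$ carrying $\sigma$ and push it slightly outward to a point $v$ along a positive combination of the outer normals of the facets containing $\sigma$; for $v$ close enough to $x$ it lies beyond exactly the facets in $\St(\sigma,\partial Q)$ and beneath all others, so $\conv(Q\cup\{v\})$ is a polytope whose boundary complex is precisely $\st(\sigma,\partial Q)$ with $v_\sigma=v$. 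Since the combinatorial type of a stellar subdivision is independent of the chosen point, this shows that stellar subdivision sends polytopal spheres to polytopal spheres; as $\sd$ is by definition a finite sequence of stellar subdivisions, every iterated stellar subdivision, and in particular every $\sd^k$, of a polytopal sphere is polytopal.

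Next I would invoke the hypothesis that $\Delta$ is a PL sphere. Fixing a simplicial polytope $P$ whose boundary $\partial P$ is PL-homeomorphic to $\Delta$ (for instance a simplex of the appropriate dimension), the Alexander--Newman stellar subdivision theorem (see \cite{Hudson}) provides a common stellar subdivision: a complex $\Sigma$ obtained from $\Delta$ by stellar subdivisions and simultaneously obtained from $\partial P$ by stellar subdivisions. By the previous paragraph, $\Sigma$, being a stellar subdivision of the polytopal sphere $\partial P$, is itself polytopal.

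It then remains to convert the polytopal complex $\Sigma$, which is merely \emph{some} stellar subdivision of $\Delta$, into a genuine \emph{derived} subdivision of $\Delta$. The key claim I would aim for is a domination statement: a sufficiently high iterated derived subdivision is a stellar subdivision of any prescribed stellar subdivision of $\Delta$. Concretely, I would first establish the local step that for any face $\tau$ the barycentric subdivision $\sd\Delta$ is a stellar subdivision of $\st(\tau,\Delta)$ — using the freedom to place all starring vertices at barycenters, so that $\sd\Delta$ geometrically refines $\st(\tau,\Delta)$ and the refinement is realized by starrings — and then iterate: writing $\Sigma$ as a finite sequence of starrings of $\Delta$ and absorbing each into one further derived subdivision, one produces a $k$ for which $\sd^k\Delta$ is a stellar subdivision of $\Sigma$. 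Combined with the first paragraph applied to the polytopal $\Sigma$, this yields that $\sd^k\Delta$ is polytopal, as required.

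The main obstacle is precisely this domination step. Verifying that $\sd\Delta$ genuinely refines a given stellar subdivision, and that the refinement can be effected by starrings rather than by an arbitrary subdivision, is delicate: the dyadic barycenters produced by iterated derived subdivision need not coincide with the vertices introduced by the starrings building $\Sigma$, so controlling this alignment — and bounding the number of derived subdivisions needed in terms of the number of starrings — is where the real work lies. I would attack it by induction on the length of the starring sequence, at each stage using placement-independence of the combinatorial type to put the new starring vertex on the barycentric grid and invoking a local analysis in the star of the newly starred face to see that one further derived subdivision dominates one further starring.
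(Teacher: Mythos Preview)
Your route is genuinely different from the paper's. The paper never tries to exhibit $\sd^k\Delta$ as a \emph{stellar} subdivision of something polytopal. Instead it works with fans and convex PL functions: it realizes some $\sd^n\Delta$ as a simplicial fan, sandwiches a \emph{regular} fan $\Delta'$ (a derived subdivision of $\partial\sigma^{d+1}$) so that $\sd^{n+m}\Delta \prec \Delta' \prec \sd^n\Delta$ using only Zeeman's refinement lemma, and then shows $\sd^{n+m}\Delta$ is regular by adding to the strictly convex function for $\Delta'$ a small correction built from the individual starrings in $\sd^m$. The point is that regularity passes along mere \emph{refinement} once you have the convex-function calculus, so the paper never needs ``stellar refinement''.

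Your approach trades that analytic input for the combinatorial claim that $\sd^k\Delta$ is a stellar subdivision of the Alexander--Newman common refinement~$\Sigma$, and this is where there is a real gap. Even granting your local step that $\sd\Gamma$ is a stellar subdivision of $\st(\tau,\Gamma)$, the induction you sketch does not close. Writing $\Sigma_{i+1}=\st(\tau_{i+1},\Sigma_i)$, the local step applied to $\Sigma_i$ gives that $\sd\Sigma_i$ is stellar over $\Sigma_{i+1}$; but your inductive hypothesis says $\sd^i\Delta$ is stellar over $\Sigma_i$, and to continue you would need $\sd^{i+1}\Delta$ stellar over $\sd\Sigma_i$. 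That requires the implication ``$A$ stellar over $B$ $\Rightarrow$ $\sd A$ stellar over $\sd B$,'' which you have not argued and which is far from clear: the barycenters of faces of $A$ are not barycenters of faces of $B$, so the vertex sets of $\sd A$ and $\sd B$ do not match up, and there is no evident sequence of starrings carrying one to the other. More fundamentally, polytopality propagates forward along stellar subdivisions but \emph{not} along arbitrary simplicial refinements, so Zeeman's lemma by itself cannot rescue the argument; this is precisely why the paper introduces the convex-function machinery. Without either a full proof of the stellar-domination claim or some regularity argument, the proposal does not reach the conclusion.
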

This answers a question asked to the authors on several occasions, in particular 
by Louis J. Billera (personal communication). The result itself is implicit in 
the work of Morelli \cite[Sec.\ 6]{Morelli}; however, it was never written up 
explicitly. We obtain the following immediate corollary, cf.\
\cite[Cor.\ I.3.12]{AB-SSZ}:
\begin{cor}
For every closed simplicial PL manifold $M$, there is an $n\ge 0$ such that for every 
nonempty face $F$ of $\sd^n M$, the simplicial PL sphere $\Lk(F,\sd^n M)$ is polytopal.
\end{cor}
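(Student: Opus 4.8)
\subsection*{Proof proposal for the Corollary}

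The plan is to derive the corollary from Theorem~\ref{thm:MakePoly} by combining it with the combinatorial structure of links in a derived subdivision and two closure properties of polytopality. Since a derived subdivision of a PL manifold is again a closed PL manifold PL-homeomorphic to it, every $\sd^nM$ is a closed simplicial PL manifold, so each $\Lk(F,\sd^nM)$ is genuinely a PL sphere; in particular the links $\Lk(g,M)$ of the finitely many nonempty faces $g$ of $M$ are PL spheres. Applying Theorem~\ref{thm:MakePoly} to this finite list produces a single $k$ with $\sd^k\Lk(g,M)$ polytopal for every nonempty $g\in M$. The two closure properties I need are: (i) a join of polytopal spheres is polytopal --- realize the factors as boundary complexes of simplicial polytopes in skew affine subspaces of a common $\R^N$ and take the convex hull, obtaining the free join, whose boundary complex is the join of the factors; and (ii) a derived subdivision of a polytopal sphere is polytopal, because it is a sequence of stellar subdivisions and a stellar subdivision of the boundary complex of a simplicial polytope is again one. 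Together, (i)--(ii) show that every iterated derived subdivision of a join of simplex boundaries is polytopal.

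The combinatorial input is the standard description of links in a derived subdivision. Writing $\sd^nM=\sd(\sd^{n-1}M)$ and viewing a nonempty face $F$ of $\sd^nM$ as a chain $\rho_0\subsetneq\cdots\subsetneq\rho_j$ of faces of $K:=\sd^{n-1}M$, one has
\[
\Lk(F,\sd^nM)\ \cong\ Q_F \ast \sd\bigl(\Lk(\rho_j,K)\bigr),
\]
where $Q_F=\sd(\partial\rho_0)\ast\bigast_i\sd\bigl(\partial\Delta^{\dim\rho_{i+1}-\dim\rho_i-1}\bigr)$ is a join of derived subdivisions of simplex boundaries, and hence polytopal by (i)--(ii). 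Thus $\Lk(F,\sd^nM)$ is polytopal as soon as $\Lk(\rho_j,\sd^{n-1}M)$ is, once more by (i)--(ii).

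The difficulty --- and the main obstacle --- is that this reduction need not terminate usefully: when $Q_F$ is empty, which happens exactly when $F$ is a flag with $\dim\rho_i=i$, it only trades a link in $\sd^nM$ for a link of the \emph{same} dimension in $\sd^{n-1}M$, and because $\sd$ does not distribute over $\ast$ one cannot simply extract a clean power $\sd^n\Lk(g,M)$. I would therefore argue by induction on $d=\dim\Lk(F,\sd^nM)$, showing that for each $d$ there is an $n_d$ such that for all $n\ge n_d$ every link of dimension at most $d$ in $\sd^nM$ is polytopal. If $Q_F\neq\emptyset$ the second factor has dimension $<d$, so $\Lk(\rho_j,\sd^{n-1}M)$ is polytopal by the inductive hypothesis and (i)--(ii) conclude. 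If $Q_F=\emptyset$, I peel off the non-reducing flag levels one at a time; the iteration terminates either upon reaching $M$, where the link is literally $\sd^n\Lk(g,M)$ for a face $g$ of $M$ and is polytopal once $n\ge k$, or upon reaching the first level $\ell$ whose analogue of $Q_F$ is nonempty, where the link becomes a derived subdivision $\sd^{\,n-\ell-1}$ of a join of a polytopal sphere with $\sd\Lk(H,\sd^\ell M)$. In this last case the inner link has dimension $<d$, so if $\ell$ exceeds $n_{d-1}$ it is polytopal by the inductive hypothesis; and if $\ell$ is below that bound then $n-\ell-1$ is large while the sphere being subdivided ranges over the finitely many links of the fixed complexes $\sd^0M,\dots,\sd^{n_{d-1}}M$, so Theorem~\ref{thm:MakePoly} applies uniformly. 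Choosing $n$ above all the finitely many thresholds $n_d$ for $d\le\dim M$ yields the desired $n$; the delicate point throughout is precisely this bookkeeping of how many derived subdivisions the surviving ``manifold link'' factor has actually undergone.
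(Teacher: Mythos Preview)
Your argument is correct, but it works harder than necessary because you overlook the one observation that makes the paper's proof short. You explicitly flag that ``$\sd$ does not distribute over $\ast$'' and then build a dimension induction with thresholds $n_d$ to get around this. The paper's first observation is precisely that $\sd$ \emph{does} distribute over joins up to a stellar subdivision: $\sd^n(\Delta\ast\Gamma)$ is a stellar subdivision of $\sd^n\Delta\ast\sd^n\Gamma$. Since stellar subdivisions preserve polytopality, this is all one needs. Iterating the link formula $n$ times and pushing each outer $\sd$ through the join at the cost of a stellar subdivision shows directly that $\Lk(F,\sd^nM)$ is a stellar subdivision of a join of complexes of the form $\sd^j(\sd\partial\sigma)$ together with $\sd^n\Lk(\widetilde{F},M)$ for some face $\widetilde{F}$ of $M$; each factor is polytopal once $n$ is chosen so that $\sd^n\Lk(\widetilde{F},M)$ is polytopal for every $\widetilde{F}\in M$, and the conclusion follows from your (i) and (ii). No dimension induction, no case split on whether $Q_F$ is empty, and no second appeal to Theorem~\ref{thm:MakePoly} for the auxiliary spheres $Q_G\ast\sd\Lk(H,\sd^\ell M)$.

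Your route does work: the peeling terminates as you describe, the finiteness in the $\ell<n_{d-1}$ branch is genuine (the relevant spheres depend only on $M$ and $n_{d-1}$), and the thresholds $n_d$ can be chosen recursively. What it buys you is nothing beyond the paper's argument; it is strictly more bookkeeping for the same conclusion. The lesson is that the non-distributivity you identified as ``the main obstacle'' is only an obstacle up to stellar refinement, which is harmless here.
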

\begin{proof}
Notice that, if $\Delta$, $\Gamma$ is any pair of PL spheres, then $\sd^n 
(\Delta \ast \Gamma)$ is a stellar subdivision of $\sd^n \Delta \ast  \sd^n 
\Gamma$ (where $\ast$ denotes the join operation); since stellar subdivisions 
preserve polytopality, we therefore observe that $\sd^n (\Delta \ast \Gamma)$ is 
polytopal if $\sd^n \Delta$ and $\sd^n \Gamma$ are polytopal. 

Observe secondly that if $\Delta$ is any simplicial complex, and $F$ is any face 
of $\sd \Delta$, then there is a face $\widetilde{F}\in \Delta$ and simplices $\sigma_1$, $\cdots$, $\sigma_k$ 
such that \[\Lk(F,\sd \Delta)\, \cong\, \sd \partial \sigma_1\ast \cdots\ast \sd \partial\sigma_k \ast \sd \Lk(\widetilde{F},\Delta).\]
Now, let $n$ be chosen large enough such that for all faces $\widetilde{F}$ of 
$M$, the complex $\sd^n \Lk(\widetilde{F},M)$ is polytopal. It then follows from 
the two observations above that for every face $F$ of $\sd^n M$, the complex 
$\Lk(F,\sd^n M)$ is polytopal.
\end{proof}

\begin{proof}[\textbf{Proof of Theorem \ref{thm:MakePoly}}]
A \Defn{complete pointed fan} in $\R^{d+1}$ is a partition of $\R^{d+1}$ into 
convex polyhedral cones with apices at the origin $\mathbf{0}$ such that the 
intersection of any two cones is a face of both.
A fan is called \Defn{regular} if it consists of the cones over the faces of a 
convex polytope (with the origin in the interior). A fan $F$ is regular if and 
only if there exists a PL function $\phi \colon \R^{d+1} \to \R$ whose domains 
of linearity are exactly the full-dimensional cones of $F$ and that is strictly 
convex across every codim 1 cone of $F$, cf.\ \cite{LRS}. Thus we have to show 
that every PL $d$-sphere $\Delta$ becomes combinatorially equivalent to some 
regular simplicial fan after several derived subdivisions.

We will be repeatedly using the following simple observation:
\begin{lem}[cf.\ {\cite[Ch.\ 1, Lem.\ 4]{ZeemanBK}}]
Let $\Delta_1$ and $\Delta_2$ denote two simplicial complexes with the same 
support; then there is a derived subdivision $\sd^k\Delta_1$ that refines 
$\Delta_2$. Moreover, one can choose $k\le |f|(\Delta_2)$.
\end{lem}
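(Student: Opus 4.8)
The plan is to reformulate the conclusion $\sd^k\Delta_1\prec\Delta_2$ in terms of the individual faces of $\Delta_2$ and then to resolve these faces one at a time, spending one derived subdivision per face. Call a face $F$ of $\Delta_2$ \emph{full} in a subdivision $\Gamma$ of the common support if $F$ is a union of faces of $\Gamma$ (equivalently, a subcomplex of $\Gamma$). If every face of $\Delta_2$ is full in $\Gamma$, then $\Gamma\prec\Delta_2$: given a face $\delta\in\Gamma$, pick a point $x$ in its relative interior and let $F$ be the carrier of $x$ in $\Delta_2$; since $F$ is full and contains the interior point $x$ of $\delta$, it must contain $\delta$, so $\delta$ lies in a single face of $\Delta_2$. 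Note also that fullness is preserved under passing to a subdivision: if $F$ is a subcomplex of $\Gamma$, it is still a union of faces of $\sd\Gamma$. Thus it suffices to produce, by iterated derived subdivision of $\Delta_1$, a subdivision in which every face of $\Delta_2$ is full.

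The key step is the following: if $\Gamma$ is a subdivision of the support and $F$ is a face of $\Delta_2$ whose boundary $\partial F$ is already full in $\Gamma$, then a single derived subdivision, suitably realized, makes $F$ full. Here I would exploit that the combinatorial type of $\sd\Gamma$ is independent of the positions of the new vertices, so I am free to \emph{pull} them onto $F$: for every face $\sigma\in\Gamma$ whose relative interior meets the relative interior of $F$, place the new vertex $v_\sigma$ at a point of $\sigma\cap F$, and place all remaining new vertices arbitrarily in their faces. With this realization, the faces of $\sd\Gamma$ lying in $F$ are precisely the simplices of the barycentric subdivision of the polyhedral cell complex that $\Gamma$ induces on $F$; since $\partial F$ is already triangulated compatibly, these simplices triangulate $F$, and $F$ becomes full. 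Fullness of all previously resolved faces is untouched, by the remark above.

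To obtain the bound I would enumerate the faces of $\Delta_2$ in order of non-decreasing dimension and resolve them in this order, so that when a face $F$ is treated its boundary is already full; each face costs at most one derived subdivision, giving $k\le|f|(\Delta_2)$. (Faces that happen to be full already, and coincident vertices, only improve the count.) I expect the main obstacle to be the key step: one must check that the pulled points indeed define a legitimate geometric derived subdivision with the prescribed combinatorics, that no degeneracies are introduced and the support is preserved, and—most delicately—that the faces of $\sd\Gamma$ contained in $F$ genuinely triangulate $F$ rather than covering only part of it. Verifying this amounts to identifying the ``in-$F$'' flags of $\Gamma$-faces with the barycentric subdivision of $\Gamma$ restricted to $F$, which is where the argument has to be carried out with care.
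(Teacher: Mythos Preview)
The paper does not supply its own proof of this lemma; it merely records the statement with a reference to Zeeman's notes. Your outline is precisely Zeeman's argument there: enumerate the faces of $\Delta_2$ in order of increasing dimension and spend one derived subdivision per face, pulling the new interior vertex $v_\sigma$ onto the current face $F$ whenever $\mathrm{relint}\,\sigma$ meets $\mathrm{relint}\,F$, so that $F$ becomes full as soon as $\partial F$ already is. The point you flag as delicate---that the pulled realisation is a legitimate derived subdivision and that its restriction to $F$ coincides with the barycentric subdivision of the convex-cell decomposition $\{F\cap\sigma:\sigma\in\Gamma\}$ of $F$---is exactly the verification Zeeman carries out, and it goes through because fullness of $\partial F$ forces each nonempty $F\cap\sigma$ to be a cell meeting its neighbours face-to-face. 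Your proposal is therefore correct and on the standard track, and it gives the stated bound $k\le |f|(\Delta_2)$.
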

Here, $|f|(\cdot)$ denotes the total number of faces of a simplicial complex.

\noindent\textbf{Claim 1:} There is an $n$ such that $\sd^n \Delta$ is 
combinatorially equivalent to a simplicial (not necessarily regular) fan in 
$\R^{d+1}$.

By definition, $\Delta$ is PL homeomorphic to the boundary of the 
$(d+1)$-simplex $\sigma^{d+1}$. In other words, there are combinatorially 
equivalent subdivisions
$\widetilde{\Delta}$ and $\Sigma$ of $\Delta$ and $\partial \sigma^{d+1}$, 
respectively. Let now 
\[\vartheta:\widetilde{\Delta} \longrightarrow \Sigma\] 
denote a facewise linear map realizing the combinatorial equivalence, and let 
$\sd^n \Delta$ be chosen fine enough such that $\sd^n \Delta 
\prec\widetilde{\Delta}$. Then $\vartheta (\sd^n \Delta)$ is a subdivision of 
$\partial \sigma^{d+1}$ combinatorially equivalent to $\sd^n \Delta$. The cone 
with respect to any interior point of $\sigma^{d+1}$ gives the desired 
simplicial fan.

In the following we identify subdivisions of $\partial \sigma^{d+1}$ with the 
corresponding fans.

\noindent\textbf{Claim 2:} There is a regular subdivision $\Delta'$ of $\sd^n 
\Delta$.

Regularity is preserved under stellar, and in particular derived, subdivisions, 
cf.\ \cite{LRS}. Thus we may take for $\Delta'$ any derived subdivision of 
$\partial \sigma^{d+1}$ that refines $\sd^n \Delta$.

Let $\sd^{n+m} \Delta$ be a derived subdivision that refines the regular 
subdivision $\Delta'$:
\[\sd^{n+m} \Delta\, \prec\, \Delta'\, \prec\, \sd^n \Delta
\]
\noindent \textbf{Claim 3:} $\sd^{n+m} \Delta$ is regular.

We have to show that there is a convex PL function with the fan $\sd^{n+m} 
\Delta$. First, let us construct a PL function $h:\R^{d+1} \longrightarrow \R$ 
that is linear on the faces of $\sd^{n+m} \Delta$, and that is strictly convex 
at every $\mr{codim}\  1$-face \emph{except} at the $\mr{codim}\  1$ skeleton of 
$\sd^n \Delta$.

This is proven by induction: $\sd^{n+m} \Delta$ is a derived, and in particular 
an iterated stellar subdivision of $\sd^n \Delta$; let $\Delta_1$, $\Delta_2, 
\ldots$ denote the intermediate complexes, so that $\Delta_{i+1}$ is obtained 
from $\Delta_{i}$ using a single stellar subdivision (obtained by introducing a 
vertex $\nu_i$). 

If $\nu$ is a ray of a simplicial fan $\mr{F}$, then let us denote by 
\[[\nu,\mr{F}]^\ast(\cdot):\R^{d+1} \longrightarrow \R\] the function that is 
$\langle \cdot, \nu\rangle$ on the ray spanned by $\nu$, that is $0$ on all 
other rays and that is linear on the faces of $\mr{F}$. Note that 
$[\nu,\mr{F}]^\ast(\cdot)$ is strictly convex across all $\mr{codim}\ 1$-faces 
of $F$ that contain $\nu$.

On $\sd^n \Delta=\Delta_0$, we just take the zero function.

By induction assumption, let us assume that $\Delta_i$ admits a function 
$h_i:\R^{d+1} \longrightarrow \R$ that is linear on faces of $\Delta_i$, and 
strictly convex at every $\mr{codim}\  1$ face except those in the $\mr{codim}\  
1$ skeleton of $\Delta$. Then, for $\varepsilon_i>0$ small enough, the function
$\varepsilon_i[\nu_i,\Delta_{i+1}]^\ast+h_i$ is linear on every face of 
$\Delta_{i+1}$, and strictly convex at all $\mr{codim}\  1$ faces newly 
introduced. Hence, by induction, there is a function $h$ with the desired 
property.
\enlargethispage{4mm}

Now, $\Delta'$ is regular, and hence there exists a strictly convex piecewise 
linear function $h': \R^{d+1} \longrightarrow \R$ whose domains of linearity are 
the facets of $\Delta'$. In particular, $h'$ is linear on all faces of 
$\sd^{n+m} \Delta$. The function $h'$ is strictly convex across those faces 
where the convexity of $h$ can fail. Hence, for an $\varepsilon>0$ small enough, 
$\varepsilon h+h'$ is strictly convex at all codimension one faces of $\sd^{n+m} 
\Delta$, and linear on all facets of $\sd^{n+m} \Delta$. This finishes the 
proof. 
\end{proof}

\subsection{Algorithmic aspects}
Now that we determined that sufficiently many iterations of the derived 
subdivision make any PL sphere polytopal, it makes sense to ask how many 
precisely are needed.
If $\dim \Delta=2$, then $\Delta$ is combinatorially equivalent to the boundary 
of a convex polytope by Steinitz Theorem, cf.\ \cite{Z}; the fact that the graph 
of every triangulation of $S^2$ is $3$-connected is an easy exercise. Thus $k=0$ 
suffices in this case.

For higher dimensions, $k$ can not be bounded that easily, as we shall see now. 
As usual, $f_i(\cdot)$ denotes the number of $i$-dimensional faces of a 
simplicial complex.
\begin{thm}
\begin{compactenum}[\rm (a)]
\item If $d \ge 3$, then there is no $k$ that would depend only on $d$ such that 
all PL $d$-spheres become polytopal after $k$ derived subdivisions.
\item For $d=3$, it number $k=k(\Delta)$ of derived subdivisions needed to make 
a PL sphere $\Delta$ polytopal can be bounded from above by \[k(\Delta)\, \le\, 
a\cdot 2^{b\cdot f_3(\Delta)\cdot 2^{c\cdot f_3^2(\Delta) \cdot 2^{d\cdot f_3^2(\Delta)}}} 
\, +\, c\cdot f_3^2(\Delta) \cdot 2^{d\cdot f_3^2(\Delta)},\]
where $a, b, c, d\ge 0$ are constants independent of $\Delta$.
\item If $d \ge 5$, then the number of derived subdivisions that makes a PL 
$d$-sphere $\Delta$ polytopal is not (Turing machine) computable from $\Delta$. 
\end{compactenum}
\end{thm}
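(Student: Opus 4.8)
The plan is to attack the three parts with quite different tools, since they make claims of increasing subtlety: (a) is a quantitative non-uniformity statement, (b) is an explicit (if towering) effective bound in the single dimension where we have unknotting technology, and (c) is a genuine undecidability result. I would treat them in order, flagging at the outset that the engine behind all three is the interaction between the combinatorial complexity of a triangulation and the known lower bounds on the number of bistellar/derived moves needed to simplify or unknot it.

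For part (a), the strategy is to exhibit, for each $k$, a PL $d$-sphere whose ``polytopality threshold'' exceeds $k$, so that no single $k=k(d)$ can work for all spheres of dimension $d\ge 3$. The natural source is a sequence of triangulated $3$-spheres (suspended or joined up to dimension $d$ if needed, using that $\Delta\ast\Gamma$ behaves well under $\sd$, exactly as in the Corollary's proof) containing knotted spanning arcs whose knots require an unboundedly large number of derived subdivisions before the sphere can even be made shellable, let alone polytopal. The key input is the Goodrick/Lickorish-type phenomenon that a triangle in a $3$-sphere can be knotted with arbitrarily high bridge or crossing number, and that polytopal (hence shellable, hence ``unknotted'') spheres cannot carry such a knot on too few vertices; a face count argument then forces $k$ to grow with the knot complexity. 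The main obstacle here is making precise the claim that polytopality at stage $k$ forces an upper bound on knot complexity in terms of $f_\bullet(\sd^k\Delta)$, and that $\sd$ increases face numbers only at a controlled (exponential) rate, so that a knot too complex for stage $k$ persists as an obstruction.

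For part (b), the plan is to assemble the tower $a\cdot 2^{b\cdot f_3 \cdot 2^{\,\cdots}}$ by composing three effective subroutines, each contributing one level of the exponential tower. First, one bounds the number of derived subdivisions needed to realize $\Delta$ as a simplicial fan: this is the effective version of Claim 1, where the bound $k\le |f|(\Delta_2)$ from the Lemma is applied to an explicit common subdivision of $\Delta$ and $\partial\sigma^{4}$ coming from an effective PL-triangulation-of-the-ball result in dimension $3$ (Moise/Bing). Second, one bounds the passage to a regular refinement (Claim 2) and back down (Claim 3) by tracking how many faces a derived subdivision of a regular complex has. The innermost and most expensive level comes from the effective bound on common subdivisions of two PL-homeomorphic $3$-balls, which is where the double-exponential $2^{c f_3^2 \cdot 2^{d f_3^2}}$ originates; the hard part of (b) is genuinely bookkeeping: verifying that each composition multiplies, rather than catastrophically blows up, the tower, and that the additive tail term $c\cdot f_3^2\cdot 2^{d f_3^2}$ absorbs the lower-order contributions.

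For part (c), the strategy is a reduction from an undecidable problem to the function $\Delta\mapsto k(\Delta)$ in dimensions $d\ge 5$, exploiting that the homeomorphism and sphere-recognition problems for PL $d$-manifolds become undecidable exactly when $d\ge 5$ (via Novikov's unsolvability of the recognition of $S^d$, built from groups with unsolvable word problem). Concretely, I would show that if $k(\Delta)$ were computable, then by running the (now-terminating, by Theorem \ref{thm:MakePoly}) search for a polytopal $\sd^k\Delta$ one could certify sphericity, or more sharply distinguish spheres within a class where sphere-recognition is known to be undecidable, yielding a contradiction. The main obstacle is arranging the reduction so that computing $k(\Delta)$ really does decide the undecidable predicate: one must ensure that the instances fed in are guaranteed PL spheres (so that Theorem \ref{thm:MakePoly} applies and $k(\Delta)$ is well-defined and finite) while the quantity $k(\Delta)$ still encodes the hard combinatorial data, so the undecidability is transferred to $k$ itself rather than to the prior question of whether the input is a sphere.
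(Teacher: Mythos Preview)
Your plans for (a) and (b) are essentially those of the paper: for (a) the paper invokes exactly the Bing--Lickorish nonshellable spheres together with Bruggesser--Mani, and for (b) it assembles the same three-level tower (Claim~1 via Mijatovi\'c's effective bound on common refinements in dimension~$3$, then the Lemma's $|f|$-bound twice), so your bookkeeping instincts are right.

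Part~(c), however, has a genuine gap in the way you have set up the reduction. As you yourself note, for a PL sphere~$\Delta$ the search ``try $j=0,1,2,\ldots$ and test whether $\sd^j\Delta$ is polytopal'' terminates by Theorem~\ref{thm:MakePoly}, and polytopality is decidable (existential theory of the reals). Hence the partial function $k$ restricted to PL spheres \emph{is} computable; there is nothing to contradict in that direction, and your worry about ``ensuring the instances fed in are guaranteed PL spheres'' is exactly the step that dissolves the argument. What the theorem actually asserts (and what the paper proves) is the stronger statement that no \emph{total} computable $\varphi:\mathfrak{K}_d\to\mathbb{N}$ satisfies $\varphi(\Delta)\ge k(\Delta)$ for every PL $d$-sphere~$\Delta$. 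With that formulation the reduction is immediate and goes the other way: feed in an \emph{arbitrary} simplicial $d$-manifold $\Delta$, compute $\varphi(\Delta)$, and test whether $\sd^{\varphi(\Delta)}\Delta$ is polytopal. A ``yes'' certifies that $\Delta$ is a PL sphere; a ``no'' certifies that it is \emph{not}, since if it were, $\varphi(\Delta)$ derived subdivisions would already suffice. This decides PL-sphere recognition, contradicting Novikov for $d\ge 5$. So the fix is not to restrict the inputs to spheres, but to let $\varphi$ be total and exploit the ``no'' branch.
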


In other words, if $\varphi:\mathfrak{K}_d\longmapsto \mathbb{N}$ is any 
computable function (cf.\ \cite{Davis}), where $\mathfrak{K}_d$ is the 
collection of $d$-dimensional simplicial complexes, then for some PL $d$-sphere 
$\Delta$, more than $\varPhi(\Delta)$ derived subdivisions are needed to make 
$\Delta$ polytopal. In particular, the number of subdivisions is not computable 
from the dimension, the $f$-vector, the flag vector or any other combinatorial 
invariant of $\Delta$.

\begin{proof}
\begin{asparaenum}[\rm (a)]
\item The first statement follows from the work of Bing \cite{Bing} and 
Lickorish \cite{LME}. Indeed, one can show that for every $d\ge 3$ and every 
$k\ge 0$, there is a PL $d$-sphere $\Delta$ such that $\sd^k \Delta$ is not 
shellable (cf.\ \cite{LME}). Since the boundary of every convex polytope is 
shellable \cite{BruggesserMani}, $\sd^k \Delta$ can not be combinatorially 
equivalent to the boundary of a convex polytope. Compare also \cite{AB-SSZ}.

\item For the second assertion, recall that there is an $\ell$ such that 
$\sd^\ell \Delta$ is combinatorially equivalent to a subdivision of (the 
simplicial fan spanned by) $\partial \sigma^4$ by Claim 1 in the proof of 
Theorem \ref{thm:MakePoly}. By a result of Mijatovi\'c \cite{MJT}, $\ell$ 
can be bounded in terms of the number of faces of $\Delta$; more explicitly, one 
can show that $\ell\le c' \cdot f_3^2(\Delta)\cdot 2^{d'\cdot f_3^2(\Delta)}$, 
where $c', d'\ge 0$ are constants independent of $\Delta$.

Now, there is a iterated derived subdivision $\Delta'=\sd^m \partial \sigma^4$ 
of $\partial \sigma^4$ that is regular and subdivides $\sd^\ell \Delta$, and the 
number of derived subdivisions needed can be bounded from above by 
\[m\le|f|(\sd^\ell \Delta)\le (4!)^\ell \cdot 2^4 \cdot f_3(\Delta).\] Finally, 
the fan $\Delta'$ is regular, and there is an $n$ such that $\sd^{\ell+n} 
\Delta$ subdivides $\Delta'$, and \[n\le |f|(\Delta')\le (4!)^m \cdot 2^4 \cdot 
f_3(\partial \sigma^4).\]
But $\sd^{\ell+n} \Delta$ is regular by Claim 3 in the proof of Theorem 
\ref{thm:MakePoly}. 

\item For the final claim: suppose there exists a Turing machine computable 
function $\varphi:\mathfrak{K}_d\longmapsto \mathbb{N}$ that, for every PL 
$d$-sphere $\Delta$, $d\ge 5$, returns a value $\varphi(\Delta)$ such that 
$\sd^{\varphi(\Delta)} \Delta$ is polytopal, we would also have Turing machine 
that decides whether or not a given simplicial $d$-manifold, $d\ge 5$, is a PL 
sphere: Recall that deciding whether a given simplicial complex is the boundary 
of a convex polytope is complete within the existential theory of the reals, and 
therefore Turing machine decidable cf.\ \cite{Davis, Mnev}. Now, if this Turing 
machine returns, for any $d$-dimensional simplicial complex $\Delta$, that 
$\sd^{\varphi(\Delta)} \Delta$ is not polytopal, then $\Delta$
is not a PL sphere by assumption; if instead it returns that 
$\sd^{\varphi(\Delta)} \Delta$ is polytopal, then $\Delta$ is a PL sphere, as 
desired.

The existence of this Turing machine, however, stands in contradiction to a 
classical result of S.~P.~Novikov \cite{Novikov}, cf.\ \cite{Nabutovsky}, who 
proved that it is not decidable whether a given $5$-manifold is actually the PL 
$5$-sphere. Therefore, the assumption is wrong, and no such Turing machine 
bounding $k$ exists. \qedhere
\end{asparaenum}
\end{proof}

\subsection{Regular triangulations and geometric bistellar moves}
Let $P \subset \R^d$ be a convex $d$-polytope. A triangulation $T$ of $P$ (the 
vertex set of $T$ may be bigger than that of $P$) is called \emph{regular} if 
there exists a PL function $h \colon P \to \R$ linear on all $d$-simplices of 
$T$ and convex across all of its $(d-1)$-simplices, compare also the notion of a 
regular fan in the proof of Theorem \ref{thm:MakePoly}, and \cite{Z} or 
\cite{LRS}.

While polytopality is a combinatorial property of a simplicial complex, 
regularity of a triangulation depends not only on its combinatorics, but also on 
the position of its vertices.
\begin{thm}
\label{thm:MakeReg}
For every triangulation $T$ of a convex polytope $P$ there is a $k$ such that 
some derived subdivision $\sd^k T$ is regular.
\end{thm}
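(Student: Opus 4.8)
The plan is to adapt the argument of Claims 2 and 3 in the proof of Theorem \ref{thm:MakePoly} to the setting of a solid polytope. The crucial simplification is that $T$ already comes as a geometric triangulation of $P \subset \R^d$, so there is no need for an analogue of Claim 1 (producing a geometric realization): we may work directly with convex piecewise linear functions on $P$. Concretely, I would produce a chain
\[
\sd^k T \,\prec\, \Delta' \,\prec\, T
\]
in which $\Delta'$ is a \emph{regular} triangulation of $P$, and then build a convex piecewise linear function certifying that $\sd^k T$ is regular, exactly as in Claim 3.

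To obtain $\Delta'$, first fix any regular triangulation $\mc R$ of $P$ (these exist for every polytope, e.g.\ by lifting the vertices generically and projecting the lower faces). Since $\mc R$ and $T$ have the same support $P$, the Lemma (Zeeman) gives a derived subdivision $\sd^j \mc R \prec T$; as regularity is preserved under derived subdivisions, $\Delta' := \sd^j \mc R$ is a regular triangulation refining $T$. Applying the Lemma once more to $\sd^k T$ and $\Delta'$ yields some $k$ with $\sd^k T \prec \Delta'$, producing the chain above.

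Next I would construct, by the same induction as in Claim 3, a piecewise linear $h \colon P \to \R$ that is linear on the faces of $\sd^k T$ and strictly convex across every interior codimension-one face \emph{except} those lying in the codimension-one skeleton of $T$. Here $\sd^k T$ is an iterated stellar subdivision of $T$; starting from the zero function on $T$ and adding, at each stellar step introducing a vertex $\nu_i$, a sufficiently small multiple of the solid-polytope analogue of $[\nu_i,\cdot]^\ast$ — the hat function taking a small value at $\nu_i$ and vanishing at the remaining vertices, signed so as to be strictly convex across the newly created walls — yields such an $h$. (Boundary walls of $\sd^k T$ lie in a single $d$-simplex and so impose no regularity condition.)

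Finally, let $h'$ be a strictly convex piecewise linear function whose domains of linearity are the $d$-simplices of the regular triangulation $\Delta'$. Because $\sd^k T \prec \Delta'$, the function $h'$ is linear on every simplex of $\sd^k T$ and weakly convex across all of its interior walls. The point to check — and the only genuinely delicate step — is that $h'$ is in fact \emph{strictly} convex across precisely the exceptional faces of $h$: an interior wall $F$ of $\sd^k T$ lying in the codimension-one skeleton of $T$ sits inside some $(d-1)$-face $F'$ of $\Delta'$ (as $\sd^k T \prec \Delta'$), and since $\Delta' \prec T$ the two $d$-simplices of $\sd^k T$ flanking $F$ lie in distinct $d$-simplices of $\Delta'$ meeting along $F'$, so $F'$ is an interior wall of $\Delta'$ and $h'$ bends strictly upward across it. Consequently, for $\varepsilon > 0$ small enough $\varepsilon h + h'$ is linear on the $d$-simplices of $\sd^k T$ and strictly convex across all of its interior $(d-1)$-faces — strict convexity being supplied by $h$ away from the skeleton of $T$ and by $h'$ along it — which exhibits $\sd^k T$ as regular. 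I expect the bookkeeping in this last verification, relating codimension-one faces across the refinement chain, to be the main obstacle, while the function-patching itself is verbatim Claim 3.
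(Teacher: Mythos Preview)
Your proposal is correct and follows essentially the same route as the paper: construct a regular triangulation $\Delta'$ refining $T$ by taking iterated derived subdivisions of an initial regular triangulation of $P$ obtained via a generic lifting, then pass to some $\sd^k T \prec \Delta'$ and invoke the argument of Claim~3. The paper's own proof is terser and simply says ``It now follows as in the proof of Theorem~\ref{thm:MakePoly}, Claim~3,'' leaving the codimension-one bookkeeping you carefully spell out implicit.
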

\begin{proof}
The proof is similar to that of Theorem \ref{thm:MakePoly}.
We need a regular triangulation of $P$ to start with: To find one, choose $h_i 
\in \R$ for every vertex $p_i$ of $P$ generically and take the lower envelope of 
the points $(p_i, h_i) \in \R^{d+1}$, cf.\ \cite{LRS}. By applying derived 
subdividisions to $T'$, we see that there exists a regular triangulation $T'$ of 
$P$ that refines $T$. Now, there is an $m\ge 0$ such that $\sd^m T$ refines 
$T'$. It now follows as in the proof of Theorem \ref{thm:MakePoly}, Claim $3$, 
that $\sd^m T$ is regular.
\end{proof}

\begin{cor}
%[Pachner, \cite{Pachner}]
\label{cor:Pachner}
Any two triangulations $T_0$ and $T_1$ of $P$ can be connected by a sequence of 
geometric Pachner (or bistellar) moves.
\end{cor}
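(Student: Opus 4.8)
The plan is to use Theorem~\ref{thm:MakeReg} to move both triangulations into the regular world, to record the intervening derived subdivisions as bistellar moves, and finally to connect the two resulting regular triangulations by interpolating their defining height functions.

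First I would apply Theorem~\ref{thm:MakeReg} to obtain $k_0,k_1\ge 0$ for which $T_0':=\sd^{k_0}T_0$ and $T_1':=\sd^{k_1}T_1$ are both regular. The key elementary observation is that a derived subdivision is \emph{already} a concatenation of geometric bistellar moves: a single stellar subdivision at a face $\tau$, performed by inserting $v_\tau$ in the relative interior of $\tau$, is supported on the circuit $\{v_\tau\}\cup\{\text{vertices of }\tau\}$, and inside $\St(\tau,T)$ it exchanges the triangulation of this circuit that omits $v_\tau$ for the one coning from $v_\tau$ --- which is exactly a geometric bistellar move. Since $\sd$ is an iterated stellar subdivision, $T_i$ and $T_i'$ are connected by geometric bistellar moves for $i=0,1$.

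It then remains to connect the regular triangulations $T_0'$ and $T_1'$. Here I would pass to the common point configuration $A$ consisting of all vertices occurring in $T_0'$ or in $T_1'$, so that the convex hull of $A$ is again $P$. Both $T_i'$ are regular triangulations of $A$: a convex piecewise linear function realizing $T_i'$ on its own vertices extends to $A$ by assigning very large heights to the finitely many points of $A$ not used by $T_i'$, so that they lie strictly above the lower envelope and remain unused, leaving the induced triangulation equal to $T_i'$. Writing $\omega_0,\omega_1\colon A\to\R$ for the two resulting height functions, a generic segment $\omega_t=(1-t)\omega_0+t\omega_1$ meets the finitely many walls of the secondary fan of $A$ one at a time, and each such wall-crossing is precisely a geometric bistellar flip between regular triangulations of $A$; equivalently, $T_0'$ and $T_1'$ are vertices of the secondary polytope of $A$, whose graph is connected, cf.~\cite{LRS}. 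Concatenating $T_0\rightsquigarrow T_0'\rightsquigarrow T_1'\rightsquigarrow T_1$ then proves the corollary.

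The step I expect to demand the most care is the reduction to a \emph{single} point configuration $A$ carrying both triangulations as regular ones: one must extend each height function over the unused points without disturbing the triangulation it induces, and one must choose the interpolating segment $\omega_t$ generically, so that it avoids the codimension-two strata of the secondary fan and crosses each wall transversally, guaranteeing that every transition is a single flip. Once this is set up the connectivity of regular triangulations is classical, and, by the circuit observation, the two derived-subdivision legs cost nothing.
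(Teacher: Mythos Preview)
Your argument is correct, but it follows a genuinely different route from the paper's.

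The paper proceeds via a cobordism: one chooses any triangulation $\widetilde{T}$ of the prism $P\times[0,1]$ restricting to $T_0$ and $T_1$ on the two ends, applies Theorem~\ref{thm:MakeReg} to make $\widetilde{T}$ regular by iterated derived subdivision, and then sweeps the resulting regular triangulation by the level hyperplanes $P\times\{t\}$; each time the sweep passes a vertex one reads off a geometric bistellar move. Your approach instead stays in dimension $d$: you make $T_0$ and $T_1$ separately regular via Theorem~\ref{thm:MakeReg}, record each derived subdivision as a chain of circuit-supported flips, and then invoke the connectivity of the graph of the secondary polytope of the merged vertex set $A$ to link the two regular triangulations. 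Both arguments ultimately rest on Theorem~\ref{thm:MakeReg} and on the observation that a stellar subdivision is itself a geometric bistellar move (which the paper also needs implicitly, since after subdividing $\widetilde{T}$ the ends become $\sd^k T_i$ rather than $T_i$). The paper trades the secondary-fan machinery for a one-higher-dimensional sweep, while you trade the dimension jump for the classical theory of the secondary polytope; your version is essentially Ewald--Shephard's argument for the regular case \cite{ES}, extended to arbitrary triangulations by Theorem~\ref{thm:MakeReg}.
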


This is essentially the main result of \cite{Morelli} and \cite{Wlo}, with the 
difference that we don't assume $P$ to be a lattice polytope and don't require 
triangulations to be unimodular. Ewald and Shephard had earlier proven it for 
regular triangulations \cite{ES}. On the other hand, Pachner \cite{Pachner}
proved that PL homeomorphic manifolds are related by \emph{combinatorial} 
Pachner moves.

To deduce Corollary \ref{cor:Pachner} from Theorem \ref{thm:MakeReg}, take any 
triangulation $\widetilde{T}$ of $P \times [0,1]$ that restricts to $T_0$ and 
$T_1$ on $P \times \{0\}$ and $P \times \{1\}$ respectively, and apply derived 
subdivisions to make $\widetilde{T}$ regular. Sweeping out from $0$ to $1$ 
produces a sequence of bistellar moves. Details can be found in \cite[Sec.\ 
2]{IzmScl}. 

Note that geometric bistellar \emph{flips} (bistellar moves other than stellar 
subdivisions and moves inverse to them) do not suffice in general to transform 
one of two triangulations of the same point configuration into the other; see 
\cite{Santos05, Santos06} for a counterexample in dimension $5$.

\small{\bibliographystyle{myamsalpha}
\bibliography{Ref}}

\end{document}